\documentclass[a4paper,11pt]{amsart}

\usepackage{amssymb,amsbsy,amsmath,amsfonts,amssymb,amscd}
\usepackage{latexsym}
\usepackage{mathtools}
\usepackage{graphics}
\usepackage{color}
\usepackage{comment}
\input xy
\xyoption{all}

\theoremstyle{plain}
\newtheorem{thm}{Theorem}[section]
\newtheorem{theorem}[thm]{Theorem}

\newtheorem{lemma}[thm]{Lemma}

\theoremstyle{definition}

\newtheorem{example}[thm]{Example}

\numberwithin{equation}{section}

\newcommand{\sB}{{\mathcal B}}

\newcommand{\sE}{{\mathcal E}}

\newcommand{\sH}{{\mathcal H}}

\newcommand{\sJ}{{\mathcal J}}
\newcommand{\sK}{{\mathcal K}}
\newcommand{\sL}{{\mathcal L}}


\newcommand{\PP}{\ensuremath{\mathbb{P}}}

\newcommand{\CC}{\ensuremath{\mathbb{C}}}

\newcommand{\ZZ}{\ensuremath{\mathbb{Z}}}

\newcommand{\hol}{\ensuremath{\mathcal{O}}}
\newenvironment{dedication}
        {\begin{quotation}\begin{center}\begin{em}}
        {\par\end{em}\end{center}\end{quotation}}


\newcommand\la{\lambda}
\newcommand\s{\sigma}

\newcommand\Ga{\Gamma}
\newcommand\De{\Delta}

\newcommand\de{\delta}
\newcommand\e{\epsilon}

\newcommand{\Lam}{\Lambda}

\DeclareMathOperator{\Pic}{Pic}

\DeclareMathOperator{\Hom}{Hom}

\DeclareMathOperator{\Heis}{Heis}

\newcommand{\ra}{\ensuremath{\rightarrow}}

\def\eea{\end{eqnarray*}}
\def\bea{\begin{eqnarray*}}

\newcommand\dual{\mathrel{\raise3pt\hbox{$\underline{\mathrm{\thinspace d
\thinspace}}$}}}
\newcommand\qe{\ifhmode\unskip\nobreak\fi\quad $\Box$}       

\def\BOX{\hfill\lower.5\baselineskip\hbox{$\Box$}}

\newtheorem{theo}{Theorem}[section]

\newtheorem{remarkk}[theo]{Remark}
\newenvironment{rem}{\begin{remarkk}\rm}{\end{remarkk}}

\newtheorem{prop}[theo] {Proposition}
\newtheorem{cor}[theo]{Corollary}

\setlength{\parindent}{0pt} 

%
\usepackage{hyperref}
%
%




\title [
$(1,3)$-Polarizations ]{ The Hesse pencil and polarizations of type $(1,3)$ on Abelian surfaces.}

\author{ Fabrizio Catanese, Edoardo Sernesi}
\address {Mathematisches Institut der Universit\"at Bayreuth\\
NW II,  Universit\"atsstr. 30\\
95447 Bayreuth}
\email{fabrizio.catanese@uni-bayreuth.de}
\address{  Korea Institute for Advanced Study, Hoegiro 87, Seoul, 
133--722.}
\address{  Universit\'a di Roma Tre, 
L.go S.L. Murialdo 1, 00146 Roma (Italy).}
\email{sernesi@gmail.com}
\thanks{AMS Classification: 14J29, 14J10, 14D15, 14D06, 14J60, 14L30.\\
 }
 
 \date{\today}

\begin{document}

\maketitle

\begin{dedication}
This article is dedicated to the memory of Alberto Collino.
\end{dedication}

\begin{abstract}
In this short note we prove two theorems, the first one is a sharpening of a result of Lange and Sernesi \cite{la-se}:
the discriminant curve W of a general Abelian surface $A$ endowed with an irreducible  polarization $D$
of type $(1,3)$
is an irreducible curve of degree $18$ whose singularities are exactly  $36$ nodes and $72$ cusps.
Moreover, we analyze the degeneration of the discriminant curve $W$ and its singularities as $A$ tends to the product of two  equal elliptic curves.

 The second theorem, using the first one in order to prove a transversality assertion, shows that the general element of a family of surfaces constructed by Alessandro and Catanese \cite{al-cat}
 is a smooth surface, thereby proving the  existence of a new family of minimal surfaces 
 of general type with $p_g=q=2, K^2=6$ and Albanese map of degree $3$.
\end{abstract}


\section*{Introduction.}

The curious reader will certainly immediately  ask: what do the Hesse pencil of plane cubic curves and 
Abelian surfaces $A$ with an ample divisor $D$ of type $(1,3)$ have in common?

The answer is easy: the Heisenberg group $\sH_3$.

In fact, both in case of an elliptic curve embedded in $\PP^2$ as a cubic $E_{\la}$
in the Hesse pencil, or of an Abelian surface mapped to   $\PP^2$ via the morphism $\varphi_D$
associated to $V: = H^0(\hol_A(D))$,
there is an action of the Heisenberg group on $V$ and   $\PP^2 = \PP(V)$.

And indeed an action of the extended Heisenberg group $\hat{\sH}$ which induces an action on $\PP^2$
of the Quotient $\hat{G}$ of  $\hat{\sH}$ by its centre:  $\hat{G}$ in this case is a semidirect product of
$\mu_3$ (generated by $(y_1, y_2, y_3) \mapsto (y_1, \e y_2, \e^2 y_3)$, $\e^3=1$)  with 
the symmetric group $\mathfrak S_3$.

In both cases $\hat{\sH}$ induces an action on the elliptic curve, respectively the Abelian surface $A$,
by the group $\hat{G}$,  generated by multiplication by $-1$ and by a group of translations isomorphic to 
$ G =  \mu_3 \times (\ZZ/3)$.

While the geometry of the Hesse pencil is classically well known, in the case of our Abelian surfaces
$A$ the geometry of the discriminant
curve $ W \subset \PP(V)$ (consisting of the singular curves in the linear system $|D|$),
and its dual curve, the branch curve $\sB$ of $\varphi_D$, was only recently addressed by Birkenhake and Lange \cite{b-l-13}, 
Casnati \cite{casnati},  Lange and Sernesi \cite{la-se}.

Birkenhake and Lange  \cite{b-l-13} calculated both curves $W, \sB$  in a special case, where $A $ is a product $E \times E$
of two equal elliptic curves, and $D$ is a divisor given by the sum of three curves, namely
 $ E \times \{0\},  \{0\} \times E$ and the antidiagonal.  
 
 They showed that in this case $W =: W_0 $ is the sum of the cubic $E$ counted with multiplicity $3$
 and its $9$ flex tangents, while $\sB_0$ is the dual sextic $E^{\vee}$ counted with multiplicity $3$.
 
 We first show here that if $E$ is not isomorphic to the Fermat (equianharmonic) elliptic curve, 
 then $W_0$ has as singular set $E$ and 36 nodes.
 
 For a general surface $A$, we get again two curves $W_t, \sB_t$ of degree $18$, and our main motivation was
 to see how does for instance $W_t$ and its singularities degenerate to $W_0$ in the limit process.
 
 The answer is the  following small improvement of the main  result of \cite{la-se}:

\begin{thm}
For a general pair $(A_t,D_t)$ of an Abelian surface with an ample divisor $D$ of type $(1,3)$ the
discriminant curve $W_t$ is  an irreducible curve of degree 18, whose singularities are
36 nodes and 72 cusps. As $A_t$ tends to the product of two equal elliptic curves $A_0 = E\times E$, with $E$ not equianharmonic,
in which case $W_0$ consists of the cubic E counted with multiplicity 3 and the nine flex-tangents
$L_1, \dots, L_9$, the 36 nodes tend to the 36 nodes $L_i \cap L_j$ while the cusps tend, in groups
of eight, to the 9 flex-points $P_1, \dots, P_9$.
\end{thm}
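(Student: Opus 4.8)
\emph{Organizing tools.} The plan is to run a one-parameter degeneration $(A_t,D_t)\to(A_0,D_0)$ with $A_0=E\times E$, and to use three ingredients for bookkeeping: the Birkenhake--Lange description $W_0=3E+L_1+\dots+L_9$; the Lange--Sernesi count (for general $t$, $W_t$ is irreducible of degree $18$ with exactly $36$ nodes and $72$ cusps); and the translation symmetry by $G=\mu_3\times(\ZZ/3)$, which acts on every $W_t$ and permutes the nine flex-points $P_i$ (equivalently the nine flex-tangents $L_i$) transitively. Both fibres have arithmetic genus $p_a=\binom{17}{2}=136$, so the total $\delta$-invariant of $W_t$ is $\sum_q\delta_q=36+72=108$ and $g(\widetilde W_t)=136-108=28$; these identities will serve as sanity checks. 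First I would record that, for $E$ not equianharmonic, the nine lines $L_i$ are in general position: the $\binom{9}{2}=36$ points $L_i\cap L_j$ are distinct, no three $L_i$ are concurrent, and $L_i\cap L_j\notin E$ (a point of $E$ on $L_i$ is forced to be $P_i$, so $L_i\cap L_j\in E$ would give $P_i=P_j$). Hence the reduced curve underlying $W_0$ is singular exactly along $E\cup\{L_i\cap L_j\}$, with three local models: an ordinary node $A_1$ at each $L_i\cap L_j$; a triple smooth branch $\phi^3$ (with $\{\phi=0\}=E$) at a generic point of $E$; and, at each flex $P_i$, the flex-tangent meeting the triple cubic, giving the local equation $y\,(y-x^3)^3=0$.

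\emph{The two easy loci.} Near $L_i\cap L_j$ the curve $W_0$ is two transverse smooth arcs, and the $A_1$ has miniversal deformation $xy=s$, whose fibres are nodes or smooth conics but never cusps; so in a fixed small ball $W_t$ carries at most one node and no cusp for small $t\neq0$. Next I would show that no singularity accumulates along the generic part of $E$. Writing the equation of $W_t$ near $E$ as $\phi^3 v+t\,w+O(t^2)$ with $v=\prod\ell_{L_j}$ a unit off the flexes, a velocity computation (Newton polygon in $(\phi,s,t)$, where $s$ is a coordinate along $E$) shows that for a general family the first-order term $w$ is transverse to the triple-line stratum along the generic part of $E$, so that $W_t$ is smooth there. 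Transversality fails exactly at the nine flexes, where $v$ vanishes because $L_i$ is tangent to $E$; this is where the cusps are forced to appear. It follows that all $72$ cusps limit to the nine flex-points, and that every node limits either to some $L_i\cap L_j$ or to a flex.

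\emph{Why groups of eight.} Once the flex analysis below shows that each $P_i$ produces \emph{cusps only}, the nodes limit solely to the $36$ points $L_i\cap L_j$; since each such ball holds at most one node and there are exactly $36$ nodes, exactly one node tends to each $L_i\cap L_j$. For the cusps, transitivity of $G$ on $\{P_1,\dots,P_9\}$ forces equidistribution, $72/9=8$ per flex. The conceptual source of the number $8$ is cleanest on the dual side: $W_t=\sB_t^{\vee}$, so cusps of $W_t$ correspond to flexes of the branch curve $\sB_t$; by Birkenhake--Lange $\sB_0=3E^{\vee}$, and the sextic $E^{\vee}$ carries nine cusps (dual to the nine flexes $P_i$ of $E$), each absorbing the classical Pl\"ucker flex-defect $8$. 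As $\sB_t\to 3E^{\vee}$ the $72$ flexes of $\sB_t$ concentrate eight at each cusp $Q_i$ of $E^{\vee}$; since the tangent to $\sB_t$ at such a flex tends to the cuspidal tangent of $E^{\vee}$ at $Q_i$, which is biduaIly the flex-point $P_i$, dualising places eight cusps of $W_t$ at each $P_i$.

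\emph{The main obstacle.} The genuinely hard step is the local deformation analysis at a flex $P_i$, where $W_0$ is \emph{non-reduced}: the model $y(y-x^3)^3$ has $\mu=\infty$ (its Jacobian ideal is divisible by $(y-x^3)^2$), so one cannot invoke conservation of the Milnor number. I would make the count rigorous, and in particular verify that only cusps (and no node) are born, in one of two ways. First, compute the leading first-order deformation of $W_t$ at $P_i$ directly from the Heisenberg-adapted theta-basis of $V=H^0(\hol_A(D))$, and read off the emerging singularities by a Newton--Puiseux analysis of the resulting unfolding. Alternatively, carry out the dual computation: the Hessian of $\sB_0=h^3$ factors as $27\,h^5\bigl(h\,H_h+2Q\bigr)$, where $H_h=\det\mathrm{Hess}(h)$ and $Q=\nabla h^{\mathrm T}\,\mathrm{adj}(\mathrm{Hess}\,h)\,\nabla h$; one then determines the intersection multiplicity of the residual factor $h\,H_h+2Q$ with $E^{\vee}=\{h=0\}$ at each cusp and checks that exactly eight of these points are limits of honest flexes of $\sB_t$. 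Either route is cross-checked against $9\times 8=72$ and $36+72=108=\sum_q\delta_q(W_t)$, which is how I would guard against miscounting.
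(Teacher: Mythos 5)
Your proposal correctly isolates the crux of the theorem --- the behaviour of $W_t$ near the nine flexes and along the generic part of $E$, where the limit $W_0=3E+L_1+\cdots+L_9$ is \emph{non-reduced}, so that Milnor-number conservation and all standard equisingularity tools are unavailable --- but it never resolves it. The two routes you offer (a Newton--Puiseux analysis of a first-order unfolding computed from a theta basis, or the dual-side factorization of the Hessian of $h^3$ as $27h^5(hH_h+2Q)$) are programs, not arguments: the needed input, namely the first-order term $w$ of the family along $E$ (equivalently, which points of $\{hH_h+2Q=0\}\cap E^{\vee}$ actually absorb flexes of $\sB_t$), is precisely the hard part, and asserting that ``for a general family'' $w$ is transverse begs the question. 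Everything else in your write-up is conditional on this missing step: the confinement of nodes to the points $L_i\cap L_j$ (hence the pigeonhole giving one node per ball and the exclusion of tacnodes) and the $\hat G$-equidistribution giving eight cusps per flex are all predicated on ``the flexes produce cusps only, and generic points of $E$ absorb nothing''. The Pl\"ucker-defect count on the dual side is only a consistency check, and as a proof it would be circular, since ``the $72$ flexes of $\sB_t$ concentrate eight at each cusp of $E^{\vee}$'' is a restatement of the claim to be proved.

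The paper closes exactly this gap by never working with the non-reduced plane curve at all: it lifts everything to the incidence curve $\Gamma\subset A\times \PP^2$, a \emph{reduced} local complete intersection whose special fibre $\Gamma_0=R_0\cup\ell_1\cup\cdots\cup\ell_9$ is explicitly understood. Cusps of $W_t$ are the images of the points of the smooth curve $\Gamma_t$ where $d\pi_2$ degenerates; as $t\to 0$ such points must limit into the union of $\Sing(\Gamma_0)$ with the degeneracy locus of $d\pi_2|_{\Gamma_0}$, and since $\pi_2|_{R_0}$ factors as $R_0\to E^{\vee}\to E$ (two duality maps), both loci map exactly to the nine flexes --- this replaces your unperformed local computation (the paper also gives an independent monodromy argument excluding cusps limiting to smooth points of $E$). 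For the nodes the paper proves \emph{persistence} rather than confinement: the two branches of $W_0$ at $L_i\cap L_j$ lift to the two \emph{disjoint} components $\ell_i,\ell_j$ of $\Gamma_0$, their deformations inside $\Gamma_t$ remain disjoint, yet their images in $\PP^2$ must still meet, so each node deforms equisingularly; combined with the Lange--Sernesi total of $36$ (counted with infinitely near points) this yields exactly $36$ distinct nodes, no tacnodes, and then eight cusps per flex. Your ball-by-ball $A_1$ argument and the $\hat G$-equivariance are correct and compatible with this, but without the incidence-curve input (or a genuine substitute for it) your proof does not close.
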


\bigskip

We give now an application of the previous Theorem.

\bigskip

In the article \cite{al-cat} the following family of surfaces was constructed: the family of quotients $ S = S'/G$
of the surfaces $S'$ in the family
 $$ S'  \subset \PP^2 \times A,  S' : = \{ (y,z) \in \PP( V) \times A | \sum_j y_j x_j(z) = 0 , \sum_i y_i^2 y_{i+1} =0\}, $$
where $ y: = ( y_1, y_2,y_3) \in \PP^2,$ $\{x_1, x_2,x_3\} $ is a canonical  basis of $V = H^0(A, \hol_{A}(D))$,
and $D$ is a divisor of type $(1,3)$. 

It was  observed that  $S' \subset C \times A$, where $C$ is the cubic curve 
$$ C: = \{ y \in \PP^2 | \sum_i y_i^2 y_{i+1} =0\},$$ 
and that $S$ has irregularity $ q=2$ since $G$ does not act by translations on $C$.

The quotients $S$, called 
AC3 surfaces, have  then $ p_g=q=2, K^2=6,$ and degree $d=3$ of the Albanese map.

In \cite{al-cat} was missing a proof that,  for general $(A,D)$, $S'$ is smooth. 
For this it is sufficient (ibidem) to show that $C$ intersects transversally the discriminant 
curve $W_t$ of a general pair $(A_t, D_t)$, and equivalently, that $C^{\vee}$ intersects transversally
the branch curve $\sB_t$.

We establish this property 
in  the present article, and prove:

\bigskip

\begin{thm}
		The general AC3 surface   $S$ is smooth, hence this family
		yields a    new irreducible component 
		of the moduli space of surfaces of general type with $p_g=q=2$, $K^2=6$,and with Albanese map 		of degree $d=3$.

	\end{thm}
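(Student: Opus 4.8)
The plan is first to reduce the smoothness of $S$ to the transversality statement and then to verify the latter by a degeneration argument resting on the first theorem. Since $S=S'/G$ and the quotient map $S'\to S$ is \'etale for general $(A,D)$ by \cite{al-cat}, the surface $S$ is smooth as soon as $S'$ is, and by the criterion recalled from \cite{al-cat} it suffices to prove that the fixed cubic $C=\{\sum_i y_i^2 y_{i+1}=0\}$ meets the discriminant curve $W_t\subset\PP(V)$ transversally for a general pair $(A_t,D_t)$. Crucially, $C$ is the \emph{same} cubic in every $\PP(V)$, since it is written in the canonical basis $x_1,x_2,x_3$, whereas $W_t$ moves. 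Because transversality is an open condition on the base of the family of $(1,3)$-polarized surfaces, it is enough to exhibit a punctured neighbourhood of the product limit $A_0=E\times E$ on which it holds, so I would analyse the intersection $C\cap W_t$ as $t\to 0$.

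The first theorem gives exactly the control one needs. As $t\to 0$ the $36$ nodes of $W_t$ tend to the points $L_i\cap L_j$ and the $72$ cusps tend, in groups of eight, to the nine flex-points $P_1,\dots,P_9$ of $E$; hence for $t$ in a small punctured disc all singular points of $W_t$ are confined to arbitrarily small neighbourhoods of these $45$ points. The first task is to check that $C$ passes through none of them. For the flexes this is immediate and independent of $E$: the $P_k$ are the base points of the Hesse pencil, and since $C$ is a Heisenberg-invariant cubic not lying in that pencil, a one-line substitution shows that none of the nine points $(0,1,-\omega)$ with $\omega^3=1$ and their cyclic images lies on $C$. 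For the nodes $L_i\cap L_j$, which move with $E$, passing through a node is a proper closed condition on the modulus of $E$, so it fails for general $E$; I would pin this down by evaluating at one convenient value of the Hesse parameter.

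The second task is transversality along the two reduced pieces of $\mathrm{supp}(W_0)$. Away from the $45$ special points, $C$ meets each flex-tangent $L_i$ in three smooth points by B\'ezout, which are transverse for general $E$; there $W_t$ is a small deformation of $L_i$, so transversality persists for $t$ small. Along the cubic $E$, where $W_0=3E$ is non-reduced, the point is that for $t\neq 0$ the triple structure $3E$ deforms into three sheets of $W_t$ lying close to $E$; since $C$ crosses $E$ transversally at each of the nine points of $C\cap E$ (a generic, hence open, condition once $C\neq E$), it crosses each of the three nearby sheets transversally as well. Combining the avoidance of the singular locus with this sheetwise transversality shows that $C\cap W_t$ consists of simple transverse intersections at smooth points of $W_t$, which is the desired property; the dual assertion for $C^{\vee}$ against $\sB_t$ serves as a cross-check.

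Finally, transversality yields that $S'$, and hence $S=S'/G$, is smooth, while $p_g=q=2$, $K^2=6$ and Albanese degree $3$ are the invariants computed in \cite{al-cat}. That the family fills a new irreducible component then follows by combining this smoothness with the deformation-theoretic count of \cite{al-cat}: the AC3 surfaces vary in a family whose dimension matches that of the corresponding Kuranishi space, and the degree-three Albanese structure is a deformation invariant, so the image of the family is open and closed in moduli and cannot sit inside a previously known component. The main obstacle is the middle step: the genuinely delicate point is handling the non-reduced component $3E$ and the cusps clustering at the flexes, and it is precisely here that the first theorem is indispensable, since it guarantees that $C$, by staying away from the flex-points and the nodes, never meets the degenerating singularities of $W_t$.
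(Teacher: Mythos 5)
Your proposal is correct and follows essentially the same route as the paper: reduce smoothness of $S$ to transversality of $C$ with $W_t$ (dually, of $C^{\vee}$ with $\sB_t$), degenerate to $A_0=E\times E$ where $W_0=3E+L_1+\cdots+L_9$, use the first theorem to confine the nodes and cusps of $W_t$ near the $36$ points $L_i\cap L_j$ and the $9$ flexes (all avoided by $C$), and conclude via transversality of $C$ with the reduced parts, the triple component $3E$ splitting into three smooth branches each transverse to $C$. The only minor divergence is in excluding the nodes: you invoke a genericity-in-$\lambda$ argument to be pinned down at one value, whereas the paper notes that the $36$ nodes sweep out the nine fixed lines $y_{j+1}=\e^i y_j$ injectively in $\lambda$, so that $C$ misses them for general $\lambda$ without further computation.
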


\section{Notation}

1) Given a finite Abelian group $H$, the finite  Heisenberg group $ \Heis(H)$ is the central extension 
$$ 1 \ra \mu_n \ra \Heis(H) \ra H \times H^* \ra 1,$$
where $\mu_n \subset \CC^* $ is the group of $n$th roots of $1$, $n$ is the exponent of $H$,
$ H^* : = \Hom (H , \CC^*)$, and $\Heis(H)$ is the group generated by the respective actions
of $h \in H$ on $ \CC^H$ given by translation, $(h\cdot f) (x) = f (x + h)$, and of $ \chi \in H^*$
given by multiplication with the character $ (\chi\cdot f)(x) = f(x) \chi(x)$.

This representation of $\Heis(H)$ on $ \CC^H$ is called the {\bf Schr\"odinger representation}.

 If  $H$ is a cyclic group $H = \ZZ/n$, we  denote $\Heis(H)  = : \sH_{n}$.

\bigskip

2) If $A$ is  an Abelian surface endowed with an ample  divisor class $D$ yielding a polarization with elementary divisors $\de_1,\de_2$ (recall that $\de_1 | \de_2$ and $h^0 (\hol_A(D)) = \de : = \de_1 \de_2$),
one has a surjective morphism (\cite{mumford}) 
\[
\begin{split}
\Phi_D : &A \ra A^{\vee} : = \Pic^0(A) \\
&x \longmapsto t_x^* (D) - D,
\end{split}
\]
 and its kernel $G: =  \sK(D) \cong H^2$, where $ H := (\ZZ/ \de_1)\oplus (\ZZ/ \de_2)$ and the Heisenberg
group $\sH_D : = \Heis(H)$ is a group of isomorphisms of the line bundle $\sL:=\hol_{A}(D)$ mapping onto $G$.

 $V: = H^0(A, \hol_{A}(D))$ is isomorphic to the Schr\"odinger representation of $\sH_D$.

\bigskip

 3) Given an ample divisor $D$ on an Abelian surface $A$, the linear system $|D|$ has no base points
 by the theorem of Lefschetz if $\de_1 \geq 2$. 
 
 If $\de_1=1$ and  $\de \geq 3$, then $|D|$ 
 has no base points 
 if it has no fixed part; since the base-point locus $\Sigma$ is $G$-invariant, hence it has cardinality a multiple
 of $|G| = \de^2$, while $D^2 = 2 \de$. 
 
 And  the system $D$ has no fixed part (see \cite[Lemma 10.1.1]{b-l})  unless the pair $(A,\hol_{A} (D))$
 is isomorphic to a polarized product  of two elliptic curves, 
 $$(A,\hol_{A} (D))  \cong (E_1 ,\hol_{E_1} (D_1 )) \times (E_2 ,\hol_{E_2} (D_2 )),$$
 where $ deg (D_1)=1$, $deg(D_2)= \de_2$.

\section{Geometry of the action  of the Heisenberg group $\sH_3$ on the plane,
and the Hesse pencil.}\label{Sheishess}

Consider the  action on $\PP^2$ with coordinates $(y_1, y_2, y_3)$ of the homomorphic image 
$\hat{G}$  of the extended Heisenberg group $\hat{\sH_3}$, where
 $\hat{G}$ is the quotient of  $\hat{\sH}$ by its centre. 
 
 $$ \hat{G} = \mu_3 \rtimes \mathfrak S_3,$$
 here 
$\mu_3$ acts  by $(y_1, y_2, y_3) \mapsto (y_1, \e y_2, \e^2 y_3)$, where $\e^3=1$,
and 
the symmetric group $\mathfrak S_3$ acts by permuting the coordinates.

The subgroup $G = \mu_3 \times \ZZ/3$ acts on $\PP^2$, where   $\ZZ/3$ acts by a cyclical permutation of the
cordinates. We let $\s$ be the permutation $ (1,2)$, hence $\s$ permutes $y_1$ with $y_2$,
and $ \hat{G} = G \cup \s G$; and we let $\tau$ be a generator of $\mu_3$.

Then each monomial of degree $3$ is an eigenvector for the action of $\tau$, and the eigenspaces are
respectively spanned by $\{y_i^3, y_1 y_2 y_3\}$, $\{y_i^2 y_{i+1}\}$, $\{y_i^2 y_{i-1}\}$ (here the indices  have to be taken in $\ZZ/3$).
Hence 

\begin{enumerate}
\item
Every $\hat{G}$-invariant cubic is an element of the Hesse pencil (here $\la := \la_1 / \la_0$)
$$E_{\la}: = \{ y \in \PP^2, y : = (y_1, y_2, y_3) | \la_0 \sum_j y_j^3 + 6 \la_1 y_1 y_2 y_3=0\}.$$
\item
The only other $G$-invariant cubics are the cubics 
$$ C : = \{ y \in \PP^2 | \sum_j y_j^2 y_{j+1} = 0\}, \ C'  : = \{ y \in \PP^2 | \sum_j y_j^2 y_{j-1} = 0\},$$
and these  are exchanged by $\s$.
\item
The $9$ base points $P_1, \dots, P_9$ of the pencil are the $G$-orbit of $P_1 : = (1,-1,0)$ (which is stabilized by $\s$)
and are the flex points of each $E_{\la}$.
\item
The group $\hat{G}$ acts on each smooth cubic $E_{\la}$ via the action which, using the group law on an elliptic curve,
can be written as $ z \mapsto \pm z + \eta$, 
where $\eta$ is any 3-torsion point in $Pic^0(E_{\la})$.
\item
There are exactly four  singular cubics in the Hesse pencil, and these are four  triangles:
since the Hessian curve of $E_{\la}$ is the curve $E_{\nu}$, $\nu : = - \frac{1 + 2 \la^3}{  6 \la^2}$,
these are the curves with $\la = \nu$, that is, $ (2 \la)^3 = -1$ or $\la = \infty. $
\item
On each triangle the vertices and the sides are stabilized by a subgroup of order $6$, while the
points on a side but different from the flexpoints have trivial stabilizer.
\item
The vertices of the four triangles yield  the four respective $G$-orbits of the points:
$$ (1,0,0), \ (1,1,1) , \ (1,1,\e), \ (1,1,\e^2).$$
\item 
The other points (that is, different from flexpoints and vertices) with a non trivial stabilizer
are the respective fixpoints of the order $2$ transformations   $ g \s $;  since
$\s g \s = g^{-1}, g \s  = g^{-1} \s g $ and these fixpoint sets are the orbit of  
 $ Fix(\s) = \{(1, -1, 0)\} \cup \{ y_1-y_2=0\}$, hence
 \item
 the points with nontrivial stabilizers are the nine flexpoints, the 12 vertices, and
 the points of the nine lines $y_{j+1} = \e^i y_j$.
 \item
 These nine lines intersect the smooth cubics of the pencil in 27 points, which, together with the nine flexpoints,
  are on the elliptic curve the solutions of $ 2 z = \eta$, for $\eta $ a 3-torsion point (hence are sums of a 2-torsion with a 3-torsion point);
  and these nine lines  intersect the triangles in the 3 vertices. 
 \item
 One concludes that all orbits have cardinality $18$, with  the exceptions of the points 
 on the nine lines $y_{j+1} = \e^i y_j$, which generally have orbit of cardinality $9$,
 of four orbits of cardinality 3 (vertices of one of the four  triangles in the Hesse pencil), and the 9
 flexpoints; moreover these nine lines intersect in the 12 vertices.

\end{enumerate}

\subsection{Intersections of the nine flex tangents of a smooth plane cubic}\label{SSintersections}
Here, we take a smooth cubic $E_{\la}$ and we take the nine lines $L_1, \dots, L_9$
which are tangent to $E_{\la}$  at the flexpoints $P_1, \dots, P_9$.

We expect that we shall obtain $36$ intersection points, but there is one exception.

\begin{prop}
The  set $\{ L_i \cap L_j| i \neq j\}$ consists of exactly 36 points if and only if $E_{\la}$ 
is not isomorphic to the Fermat elliptic curve $E_0$.
\end{prop}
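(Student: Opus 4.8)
The plan is to analyze the flex tangents $L_1,\dots,L_9$ of $E_\la$ using the $\hat G$-action, since both the nine flexpoints $P_i$ and their tangent lines form single $\hat G$-orbits. Because the group $\hat G$ acts transitively on the flexpoints, it acts transitively on the tangent lines, and hence on the set of unordered pairs $\{i,j\}$ up to the orbit structure of the action on pairs. So the first step is to understand, combinatorially, how $\hat G$ acts on the $\binom{9}{2}=36$ pairs of flex tangents, and to determine which intersection points $L_i\cap L_j$ could possibly coincide. Generically two distinct lines meet in one point, giving a candidate total of $36$ points; the set fails to have exactly $36$ points precisely when some triple of the lines is concurrent, forcing several of the $L_i\cap L_j$ to collapse.

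Next I would translate the concurrency condition into the group law on the elliptic curve. By item (4) of the preceding section, the $\hat G$-action on $E_\la$ is $z\mapsto \pm z+\eta$ with $\eta$ ranging over the $3$-torsion, and the flexpoints $P_i$ are exactly the $3$-torsion points $\Pic^0(E_\la)[3]$. The tangent line $L_i$ at the flex $P_i$ meets $E_\la$ with multiplicity $3$ at $P_i$, so by the standard description of collinearity via the group law, three flexpoints $P_a,P_b,P_c$ are collinear iff $P_a+P_b+P_c=0$. The key computation is then: three flex tangents $L_a,L_b,L_c$ are concurrent iff a corresponding additive relation among the $3$-torsion points holds. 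Working this out on the Hesse normal form, concurrency of three of the nine flex tangents occurs exactly when $E_\la$ acquires an extra automorphism, i.e.\ when $E_\la$ is the equianharmonic (Fermat) curve $E_0$ with $j=0$; the extra order-$3$ automorphism produces the additional collinearities that make flex tangents concurrent.

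Concretely, I would exhibit the flex tangents explicitly in Hesse coordinates. For the flexpoint $P_1=(1,-1,0)$ one computes the tangent line $L_1$ to $E_\la$ directly from the defining equation, and then obtains the other eight $L_i$ by applying the translations in $G=\mu_3\times\Z/3$. Having the nine lines in closed form (with coefficients depending on $\la$), the question of whether any three are concurrent reduces to the vanishing of certain $3\times 3$ determinants in the line coefficients. One then checks that these determinants vanish identically in $\la$ only for the trivially concurrent triples (the ones already forced by the flex structure), and that genuinely new concurrencies force an algebraic condition on $\la$ whose solution set is exactly the $\hat G$-orbit of $j$-invariants equal to the Fermat curve. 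The equianharmonic case must be singled out because there the $36$ nodes of $W_0$ (as in the Theorem) degenerate, matching the hypothesis ``$E$ not equianharmonic'' used elsewhere.

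The main obstacle I expect is the concurrency bookkeeping: there are many triples among the nine lines, and one must carefully use the $\hat G$-symmetry to cut the casework down to a few orbit representatives, then verify that for generic $\la$ no three tangents meet while for $\la=0$ (Fermat) the extra symmetry forces exactly the concurrencies that reduce the count below $36$. I would lean heavily on the orbit description in items (9)--(11) above to organize which intersection points $L_i\cap L_j$ can collide, so that the explicit determinant computation only needs to be done on a single representative pair per orbit rather than all $36$.
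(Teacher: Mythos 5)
Your proposal has a genuine gap at its center. The claimed key step --- that concurrency of three flex tangents $L_a,L_b,L_c$ is equivalent to ``a corresponding additive relation among the $3$-torsion points'' --- cannot be correct even in principle. The nine flexpoints are the base points of the Hesse pencil, hence they are the \emph{same} nine points for every $E_\la$, and (taking one of them as origin) the group structure they carry is also independent of $\la$; consequently any condition phrased purely in terms of the group law on the $3$-torsion is independent of $\la$. But concurrency of flex tangents manifestly depends on $\la$: the lines $L_i(\la)$ move with $\la$, and the proposition asserts that concurrency occurs exactly for the equianharmonic members. Concretely: every line joining two flexes contains a third, so every such triple satisfies $P_a+P_b+P_c=0$, yet for the Fermat cubic only three of these twelve collinear triples have concurrent tangents (those on the coordinate lines, whose tangents meet at the vertices of $\{y_1y_2y_3=0\}$). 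Collinearity on a cubic is a divisor-class condition; concurrency of tangent lines is not, and no bookkeeping with $3$-torsion relations will detect it.

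What remains of your plan is the fallback: write the nine lines $L_i(\la)$ explicitly (each has coefficients linear in $\la$) and test concurrency of triples by the vanishing of $3\times 3$ determinants, using $\hat G$-symmetry to cut down the casework. That route is viable and would prove the statement, but it is left entirely unexecuted, and it is precisely the whole content of the proposition: ``one then checks that these determinants vanish only for\dots'' is the assertion to be proved, not a step of a proof. Note also that the symmetry reduction must be organized by $\hat G$-orbits of \emph{triples} of lines ($\hat G$ has order $18$ acting on $\binom{9}{3}=84$ triples, so several orbit representatives are needed), not ``a single representative pair per orbit''. For contrast, the paper avoids all of this with a short polar-conic argument: if $P$ lies on three flex tangents, the polar conic $Q_P$ meets $E$ with multiplicity $2$ at each of the three corresponding flexes, which forces $Q_P$ to be a double line $2L$; the case $P\in L$ gives $P\in E$, a contradiction, while the case $P\notin L$ gives $f=z^3+\phi(x,y)$ with $\phi$ having distinct roots, i.e.\ the Fermat cubic, and the converse is an explicit check on $E_0$. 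You would do well to study that argument, since it replaces your open-ended determinant casework with a single conceptual dichotomy.
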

\begin{proof}
Assume that $E$ is a smooth cubic, and that a point $P$ lies on three distinct  flextangents
$L_1, L_2, L_3$. Then the polar conic $Q_P$ passes through the corresponding flexpoints $P_1, P_2, P_3$.
A local calculation shows that the intersection number of $E$ and $Q_P$ at $P_i$ equals $2$,
hence $Q_P$ cannot be smooth: otherwise there would be 3 tangents (the lines $L_j$)
passing through $P$, a contradiction. Hence $Q_P$ consists of two lines. These cannot be distinct, otherwise
each $P_i$ is either a singular point of $Q_P$ or $L_i \subset Q_P$:  this can only happen   if
$Q_P = L_i + L_j$, and, for $k \neq i,j$, $ P_k \in L_i \cap L_j$. However, this is manifestly impossible,
since $L_i \cap E = \{ P_i\}$. The conclusion is that $Q_P$ is a double line, $Q_P = 2 L$.

There are two possibilities: $ P \in L$, or $P \notin L$.

We can take coordinates such that $P= (0,0,1)$ and $L$ is $x=0$, respectively $z=0$.

Letting $f=0$ be the equation of $E$, we know that $Q_P$ has equation $\frac{\partial f }{\partial z }=0$. 

In the first case $\frac{\partial f }{\partial z }=x^2$, hence $ f = x^2 z + \phi (x,y)$, hence $ P \in E$,
a contradiction.

In the second case $\frac{\partial f }{\partial z }=3 z^2$, hence $ f = z^3 +  \phi (x,y)$. But $\phi(x,y)$
has three distinct roots (corresponding to the three points $P_1, P_2, P_3$). 
Changing coordinates we may assume that  $\phi (x,y) = x^3 + y^3$.

Then $E$ is the Fermat cubic, in suitable coordinates.

Conversely, if $E$ is the Fermat cubic,  by the previous calculations we find such a point $P$
whose polar is twice a line $L$ passing through 3 flexpoints.

By $\hat{G}$-symmetry,
it follows that the whole orbit of $P$ is made of points with such a property. Clearly we have
therefore, for $E_0$,  the three vertices of the triangle $\{ y_1y_2y_3=0\}$ whose polar is twice the opposite side of the triangle. 

\end{proof}

\begin{rem}

In general, given a point $P = (x_1, x_2, x_3)$, the family of polar conics 
of the cubics $E_{\la}$, $Q_P(\la) = \la_0 Q_P(0) + \la_1 Q_P(\infty)$
 is a linear system of conics
 $$  \sum_j x_j ( \la_0 y_j^2 + 2 \la_1  y_{j-1}y_{j+1})=0,$$
 whose determinant is again a cubic $\{x_1 x_2 x_3 (1 + 2 \la^3) - \la^2 \sum_j x_j ^3=0\}$,
 namely  the cubic of the Hesse pencil $E_{\nu (\la)}$ previously considered, and we have a rank one conic only for the singular points
 of   $E_{\nu (\la)}$.
 
 Hence, the only points $P$ which belong to three  flex tangents to some $E_{\la}$ 
 are the 12 vertices of the four triangles in the Hesse pencil; and this occurs exactly
 for the cubics which are corresponding to the four triangles, under the map
 for the Hesse pencil $ \la \mapsto \nu (\la) = - \frac{1 + 2 \la^3}{6 \la^2}$.
 
 When we look for smooth conics, we obtain $\la=0$, or $\la = \pm \e^i$, since  the solutions of
 $$ 2 \la^3 - 3 \e^i \la^2 + 1 =0$$
 are $\pm \e^i$ and $ - (1/2) \e^i$.   
 
 For these values we obtain a Fermat cubic (an equianharmonic cubic).

\end{rem}

An interesting question is how do the points $P_{i,j}(\la) : = L_i(\la) \cap L_j(\la)$ vary.

in general, we have:

\begin{lemma}
Given two homological pencils of lines through two points $P_i, P_j$ (that is, there is a projectivity between the
two pencils of lines, so that one can write each  pencil as a pencil $\{L_i(\la)\}$ of lines),  then the intersection 
of homological lines $P_{i,j}(\la) : = L_i(\la) \cap L_j(\la)$ move in a conic, which contains the line $L$  joining $P_i, P_j$
if and only if this line is self homological.
\end{lemma}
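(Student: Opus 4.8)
The plan is to recognize this as the classical theorem of Steiner, that two projectively related pencils of lines generate a conic, together with a precise analysis of when that conic degenerates. I would begin by fixing linear forms $\ell_0,\ell_1$ defining two distinct lines of the pencil through $P_i$, and $m_0,m_1$ for the pencil through $P_j$, so that a general line of the first pencil is $\{s\ell_0+t\ell_1=0\}$ and of the second $\{s'm_0+t'm_1=0\}$, with $[s:t],[s':t']\in\PP^1$ the pencil parameters (the affine parameter $\la$ of the statement). The homology is a projectivity $\PP^1\to\PP^1$ between these parameters; since it is given by an invertible $2\times2$ matrix, I can absorb that matrix into a new choice of the basis $m_0,m_1$ of the second pencil, after which corresponding lines carry the same parameter: $\{s\ell_0+t\ell_1=0\}$ is homological to $\{sm_0+tm_1=0\}$.

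Next, a point $y\in\PP^2$ lies on a pair of homological lines precisely when the system $s\,\ell_0(y)+t\,\ell_1(y)=0$, $s\,m_0(y)+t\,m_1(y)=0$ has a nontrivial solution in $[s:t]$, that is, when
\[
\ell_0(y)\,m_1(y)-\ell_1(y)\,m_0(y)=0 .
\]
This is a homogeneous quadratic in $y$, so the locus $\{P_{i,j}(\la)\}$ is the conic $Q:\ \ell_0 m_1-\ell_1 m_0=0$, which visibly contains $P_i$ (where $\ell_0=\ell_1=0$) and $P_j$ (where $m_0=m_1=0$), as expected.

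It remains to decide when $Q$ contains the line $L=\overline{P_iP_j}$, and this is the step requiring care. I would restrict $Q$ to $L\cong\PP^1$. Since $\ell_0,\ell_1$ vanish at $P_i$, their restrictions to $L$ are scalar multiples $a_0u,a_1u$ of the (essentially unique) form $u$ on $L$ vanishing at $P_i$; likewise $m_0|_L=b_0v$, $m_1|_L=b_1v$ with $v$ vanishing at $P_j$. Hence
\[
Q|_L=(a_0b_1-a_1b_0)\,uv,
\]
so $L\subset Q$ if and only if $a_0b_1-a_1b_0=0$. On the other hand the parameter of $L$ in the first pencil is the $[s:t]$ for which $s\ell_0+t\ell_1$ vanishes on $L$, namely $[a_1:-a_0]$, and in the normalized second pencil it is $[b_1:-b_0]$; that $L$ be self homological means precisely that these coincide, i.e.\ $a_0b_1-a_1b_0=0$. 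This is exactly the condition $L\subset Q$, which establishes the equivalence.

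I expect the main obstacle to be this final bookkeeping: pinning down the meaning of ``self homological'' for the common line $L$ and matching it against the two separate parameter computations. The only genuine case distinction is the harmless one in which a chosen basis line $\ell_i$ or $m_i$ happens to coincide with $L$ itself, which can either be excluded by the choice of basis or checked directly, since then the corresponding restriction vanishes identically on $L$ and the computation of $Q|_L$ simplifies accordingly.
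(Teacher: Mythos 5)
Your proof is correct, and at its core it uses the same mechanism as the paper's: eliminate the pencil parameter from the incidence condition, so that the locus of intersections of corresponding lines appears as the vanishing of a $2\times 2$ determinant (Steiner's projective generation of conics). The organization, however, is genuinely different. The paper chooses coordinates adapted to the projectivity and argues by cases: when $L$ is self homological the incidence condition is brought to the normal form $(x,y)=(x,z)\in\PP^1$, giving the split conic $x(y-z)=0$, which contains $L=\{x=0\}$; otherwise it is brought to $(x,y)=(z,x)\in\PP^1$, giving $x^2-yz=0$, which contains no line. This is very quick, but it silently relies on the fact that coordinate changes fixing $P_i$ and $P_j$ suffice to put the projectivity into one of these two normal forms (in effect a Bruhat-type statement for $\mathrm{PGL}_2$ relative to the stabilizer of the parameter of $L$), which the paper does not justify. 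Your argument avoids normal forms entirely: you absorb the projectivity into the choice of the basis $m_0,m_1$, write the conic once and for all as $Q=\{\ell_0 m_1-\ell_1 m_0=0\}$, and decide $L\subset Q$ by restriction to $L$, getting the clean criterion $a_0b_1=a_1b_0$, which you then identify with self homology of $L$ computed in the same coefficients. This buys uniformity (no case distinction) and full rigor in the normalizations; the only point you leave tacit --- that $\ell_0 m_1-\ell_1 m_0$ is not identically zero, so $Q$ really is a conic --- is a one-line check: identical vanishing would force $m_0=c\ell_0$ with $\ell_0$ vanishing at both $P_i$ and $P_j$, hence $\ell_0,m_0$ proportional to the equation of $L$, and then $m_1=c\ell_1$ would force the same for $\ell_1$, contradicting the independence of $\ell_0,\ell_1$.
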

\begin{proof}
If the line $L$ is self homological, there are coordinates such that the intersection point is determined  by the equations 
$(x,y) = (x,z) \in \PP^1$, hence either $x=0$ or $y-z=0$.

Otherwise, we have the condition $(x,y) = (z,x) \in \PP^1$, hence 
the intersection point satisfies $ x^2 - y z=0$.

\end{proof}

In our situation, given two flexpoints $P_i, P_j$,  the line $L$  joining them contains  a third flexpoint $P_k$,
hence there is a triangle in the Hesse pencil having $L$ as tangent at these points, whence we are in the first situation of the previous Lemma, and the points $P_{i,j}(\la)$,
for $E_{\la}$ different from this triangle, move in a line.

Because of $\hat{G}$ symmetry, we may assume that the first flexpoint $P_i$  is $P_1 = (1,-1,0)$,
and $P_j$ is either $(0,1,-1)$ or $(1, -\e, 0)$.

In the first case the intersection point satisfies $$  y_1 + y_2 - 2 \la y_3= 2 \la y_1 - y_2 - y_3 = 0,$$
whence $ y_1 -y_3 =0$. In the second case the intersection point satisfies
$$ y_1 + y_2 - 2 \la y_3=y_1 + \e^2 y_2  - 2 \e \la y_3= 0,$$
whence $y_1-y_2=0$.

It follows then easily:

\begin{cor}
For $\la \neq 0, \infty, \pm \e^i, - \frac{1}{2}  \e^i$, the intersection points of pairs of flex tangents to
the cubic $E_{\la}$  are 36 distinct points  $P_{i,j}(\la) : = L_i(\la) \cap L_j(\la)$. These belong to the 
nine lines  $y_{j+1} = \e^i y_j$ in groups of four. In particular, these 36 points form 4 $\hat{G}$-orbits of cardinality 9.

\end{cor}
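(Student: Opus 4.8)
The plan is to assemble the three assertions --- distinctness of the $36$ points, their incidence with the nine lines $y_{j+1}=\e^i y_j$ in groups of four, and the orbit structure --- from the Proposition, the Remark, the Lemma, and the $\hat{G}$-equivariant bookkeeping recorded in the enumeration of Section~\ref{Sheishess}.

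First I would dispose of the excluded values. For $\la\notin\{0,\pm\e^i\}$ the curve $E_\la$ is not the Fermat (equianharmonic) cubic, and for $\la\notin\{\infty,-\tfrac12\e^i\}$ it is smooth (the four triangles being exactly these values). Thus the eight excluded parameters are precisely those for which $E_\la$ fails to be a smooth non-Fermat cubic, and for the remaining $\la$ the Proposition gives exactly $36$ distinct points $L_i\cap L_j$. Moreover the Remark shows that the only points lying on three distinct flex tangents of some $E_\la$ are the twelve vertices, occurring precisely at the Fermat values; hence for our $\la$ no $P_{i,j}(\la)$ is a vertex, and likewise none is a flexpoint (a flexpoint $P_k$ on $L_i\cap L_j$ would put the three flex tangents $L_i,L_j,L_k$ through $P_k$, again forbidden).

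Next comes the incidence with the nine lines, for which I would use the stabilizing involution rather than recompute each case. Given a pair of flexpoints $P_i,P_j$, the inversion $\iota$ of $E_\la$ through the midpoint of $P_i,P_j$ (an order-$2$ element of $\hat{G}$, of the form $z\mapsto -z+\eta$ as in property (4)) swaps $P_i\leftrightarrow P_j$, hence carries $L_i$ to $L_j$ and fixes $P_{i,j}(\la)=L_i\cap L_j$. By property (8) the fixed locus of such an involution is a single flexpoint together with one of the nine lines $y_{j+1}=\e^i y_j$; since by the first step $P_{i,j}(\la)$ is not that flexpoint, it lies on the nine-line component. (The Lemma together with the two explicit tangent computations preceding the statement confirm this directly for two representative directions, the locus being $y_1=y_3$ and $y_1=\e y_2$ respectively.) Now the $36$ points form a $\hat{G}$-invariant set, none of its members is a vertex or a flexpoint, so by property (11) each has $\hat{G}$-orbit of cardinality exactly $9$; therefore the $36$ points split into $36/9=4$ orbits. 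Finally, the group $\hat{G}$ (acting on the nine flexpoints as $\{z\mapsto\pm z+\eta\}$) permutes the nine lines transitively --- $\mu_3$ moving the index $i$ and the cyclic $\ZZ/3$ moving $j$ --- with line-stabilizer of order $2$; since the order-$2$ stabilizer of $P_{i,j}(\la)$ must preserve the unique nine-line through it, point-stabilizer and line-stabilizer coincide, so each orbit meets every one of the nine lines in exactly one point. Running over the four orbits yields four points on each line, which is the ``groups of four''.

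The main obstacle is the genericity bookkeeping in the last two steps: one must be certain that, for $\la$ outside the eight listed values, the points $P_{i,j}(\la)$ are genuinely generic on the nine lines --- neither vertices, nor flexpoints, nor accidentally coincident --- so that the clean orbit-size-$9$ count of property (11) applies and the stabilizers are of order exactly $2$. This is exactly what the Proposition (distinctness away from Fermat) and the Remark (triple points only at the Fermat parameters) are invoked to guarantee.
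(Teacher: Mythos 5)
Your route is genuinely different from the paper's, and its core idea is sound. Where the paper proves the incidence statement via its Lemma on homological pencils plus an explicit computation of two $\hat{G}$-representative intersection points, you get it conceptually: the unique involution $\iota\in\hat{G}$ with $\iota(P_i)=P_j$ (namely $z\mapsto -z+P_i+P_j$, which lies in $\hat{G}$ by property (4)) carries $L_i$ to $L_j$, hence fixes $P_{i,j}(\la)$, which therefore lies in $\Fix(\iota)$, i.e.\ is either a flexpoint or a point of one of the nine lines by properties (8)--(9); and your three-concurrent-tangents argument correctly rules out the flexpoint. This argument is in fact more robust than the paper's coordinate check: for the second representative pair (tangents at $(1,-1,0)$ and $(1,-\e,0)$) the intersection point is $(-2\la\e^2,-2\la\e,1)$, which lies on $y_1=\e y_2$ --- as your parenthetical says --- and not on $y_1=y_2$ as the paper's text asserts; this is a harmless slip in the paper (either is one of the nine lines), but your involution argument never risks it. Your equivariant count (each size-$9$ orbit meets each of the nine lines exactly once, so four orbits give four points per line) is also correct, \emph{granted} the orbit size.

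That is where there is a genuine gap: the exclusion of vertices. You claim that, by the Remark, no $P_{i,j}(\la)$ is a vertex, ``hence'' from the fact that points on three flex tangents occur only at Fermat values. This is a non sequitur: the Remark concerns points lying on three \emph{flex tangents} of $E_\la$, whereas a vertex is a triple point of the nine lines $y_{j+1}=\e^i y_j$, which are not flex tangents; a priori a vertex could be the intersection of exactly \emph{two} flex tangents, and nothing you cite forbids this. The point is not cosmetic: if some $P_{i,j}(\la)$ were a vertex, its orbit would have cardinality $3$ rather than $9$ (property (11)) and the nine-line through it would not be unique, so both the four-orbit decomposition and the groups-of-four count would collapse. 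Fortunately the gap closes with the tools you already use: if $v=L_i\cap L_j$ were a vertex, its stabilizer has order $6$ (property (6)), hence contains an element $g$ of order $3$; $g$ fixes $v$ and permutes the set of flex tangents through $v$, which for your $\la$ is exactly $\{L_i,L_j\}$, so $g$, having odd order, fixes both lines, hence fixes the point $P_i=L_i\cap E_\la$; but order-$3$ elements of $\hat{G}$ act on $E_\la$ as translations by nonzero $3$-torsion (property (4)), hence freely --- a contradiction. (Equivalently: a vertex lies on three of the nine lines, so is fixed by three distinct involutions, each of which would have to swap $P_i$ and $P_j$, while $z\mapsto -z+P_i+P_j$ is the only involution doing so.) With that inserted your proof is complete. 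A last cosmetic remark: the excluded set $\{0,\infty,\pm\e^i,-\tfrac{1}{2}\e^i\}$ has eleven elements, not eight.
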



\section{ The geometry of the discriminant curve and its degenerations}

Let $(A,\sL)  = (A,\hol_A(D))$ be a polarized abelian surface of type $(1,3)$ such that $|\sL|$ has no fixed components. The general element of $|\sL|$ is a nonsingular curve of  genus 4 and $\dim(|\sL|)=2$. 
After a choice of a  canonical  basis $s_1, s_2, s_3$ of $H^0(A,\sL)$ we will identify $|\sL|=\PP(H^0(A,\sL))$ with $\PP^2$, with homogeneous coordinates $(y_1,y_2,y_3)$  (hence to $y$ corresponds the section $\sum_i y_i s_i(z)$) and therefore we will identify   $\PP(H^0(A,\sL)^\vee)$ with $\PP^{2\vee}$ with dual coordinates $(x_1,x_2,x_3)$. 
The morphism \footnote{which we  also denote $\varphi_D$} 
$$
\varphi_\sL: A\ra \PP^{2\vee}
$$
is a $6:1$ cover, and it is given by 
$$\varphi_\sL (z) = (s_1(z), s_2(z), s_3(z))$$
 where $s_1, s_2, s_3$ is the chosen  canonical basis
(for the Heisenberg action). 

The geometry of this cover has been described in \cite{la-se}   for a  general $(A,\sL)$, by degenerating it to a special configuration $(A_0,\sL_0)$, whose geometry has been studied in detail in \cite{b-l-13}. In this section we outline what has been proved, referring to the above mentioned papers  for details.

\subsection{The incidence curve.}
This subsection is extracted from \cite{la-se}. Given any $(A,\sL)$ as above  we may consider the incidence curve $\Gamma\subset A\times \PP^2$, which is defined as follows. Let $$
\xymatrix{
A& A\times \PP^2 \ar[l]_-{\pi_1} \ar[r]^-{\pi_2}& \PP^2
}
$$
be the projections. Let $J_1(\sL)$ denote the first jet bundle of $\sL$: its fibre at a point $z \in A$ is the vector space $\sL\otimes \hol_A/\sJ_z^2$.
There is a natural homomorphism of sheaves 
$$
\sigma: \pi_2^*\hol_{\PP^2}(-1) \ra \pi_1^*J_1(\sL)
$$
 which associates to every local section the truncation at second order of its Taylor expansion. To $\sigma$ there corresponds a section of 
 $$
 \sE := \pi_2^*\hol_{\PP^2}(1)\otimes \pi_1^*J_1(\sL)
 $$
 The incidence curve $\Gamma$ is defined as the vanishing scheme of this section. Set theoretically
 $$
 \Gamma = \{(z,[C])\in A\times \PP^2|  z \ \ \text{is a singular point of}\ \  C = \{\sum_i y_i s_i (z)=0 \}\}.
 $$
 Every component of $\Gamma$ has at most dimension  one, because otherwise there would be infinitely many non-reduced  curves $[C]\in \PP^2$,  and this is absurd since we assume 
 that  $|D|$ is base point free.
 On the other hand, since $\sE$ is a vector bundle of rank three, every component of $\Gamma$ has codimension less than or equal to three. Therefore $\Gamma$  has pure dimension one, and it is a local complete intersection curve. Its main properties are:
 
 \begin{enumerate}
     \item $\Gamma$ has arithmetic genus $p_a(\Gamma)=28$. This follows from a Chern class computation. 

     \item $\pi_1(\Gamma) \subset A$ is the \emph{ramification divisor}  $R$ of $\varphi_\sL$. The inclusion $\pi_1(\Gamma)\subset R$ is obvious. On the other hand, if $p\in R$ then the pencil  $|\sL(-p)|$ consists of curves with a fixed tangent at $p$, or a singular  point at $p$, and therefore contains a curve singular there, hence $p\in \pi_1(\Gamma)$.

     \item The \emph{branch divisor} $\sB\subset \PP^{2\vee}$, the image of $R$ under $\varphi_\sL$,  is a plane curve of degree 18.  This follows since, by  Hurwitz' formula, $ R$ is linearly equivalent to $3D$ and because $ 3 D^2=18$. Hence, moreover, $p_a(R)=28$.
     \item $\pi_2(\Gamma)\subset \PP^2$ is the \emph{discriminant scheme of $|\sL|$}, denoted by $W$.  It is a plane curve, because $\Gamma$   is a curve, and parametrizes the singular curves in $|\sL|$.
      $W$ has degree $18$ by the Zeuthen Segre formula, because $D^2=6$ and the number of singular fibres in a general pencil
     in $|D|$ equals $ 6 + 4 (g-1) = 18$, since the curves of $|D|$ have genus $g=4$.

     \item $W$ contains the closure $\overline V_{\sL,1}$  of the \emph{Severi variety} $V_{\sL,1}\subset |\sL|$, which parametrizes the curves in $|\sL|$ having precisely one node and no other singularities. In general the inclusion $\overline V_{\sL,1}\subset W$ is strict (see the next subsection).

     \item $\overline V_{\sL,1}$ ha degree at most 18 and equality holds if and only if $W=\overline V_{\sL,1}$. In this case $\pi_2: \Gamma \ra W$ is birational on each irreducible component of $\Gamma$. In particular, if $\Gamma$ is reduced and $W=\overline V_{L,1}$ then $W$ is also reduced of degree 18.

     \item The group $\hat G$ acts on $A,\PP^2,\PP^{2\vee}$ equivariantly with respect to $\pi_1$ and $\pi_2$, so that   there are induced actions on $\Gamma,R,\sB,W$.
 \end{enumerate}

 Altogether we have the following basic configuration associated to any $(A,\sL)$ as above:
  \begin{equation}\label{E:config}
      \xymatrix{
 &&\Gamma \ar[dll]_-{\pi_1}\ar[dr]^-{\pi_2} \\
 R\ar[d]^-{\varphi_L}&&&W\\
 \sB
 }
  \end{equation}
The main result of \cite{la-se} is the following:

\begin{theorem}\label{T:mainls}
   If $(A,\sL)$ is a general abelian surface of type $(1,3)$ then 
   \begin{itemize}
       \item[(i)] $\Gamma$ is reduced and irreducible of arithmetic genus 28.
       \item[(ii)] both $\sB$ and $W$ are reduced and irreducible curves of degree 18 and geometric  genus $p_g(\sB)=p_g(W)=28$, with 72 cusps and 36 nodes, possibly infinitely near   (tacnodes).
       \item[(iii)] $\sB$ and $W$ are dual to each other.
       \item[(iv)]  $\pi_1:\Gamma\ra R$ and $\pi_2:\Gamma\ra W$ are birational.
   \end{itemize}
\end{theorem}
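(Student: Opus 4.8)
The plan is to deduce the whole statement by studying the flat family of incidence curves $\{\Gamma_t\}$ over the irreducible parameter space $T$ of polarized pairs $(A_t,\sL_t)$ of type $(1,3)$ with no fixed component, together with the explicit description of the degenerate fibre $\Gamma_0$ sitting over the special product surface $(A_0,\sL_0)$ of Birkenhake--Lange. Since each $\Gamma_t$ is the zero scheme of a section of the fixed bundle $\sE=\pi_2^*\hol_{\PP^2}(1)\otimes\pi_1^*J_1(\sL)$ of rank three on $A_t\times\PP^2$, the total family $\Gamma\ra T$ is flat with Hilbert polynomial constant in $t$; in particular the arithmetic genus is the deformation invariant $p_a(\Gamma_t)=28$ already obtained from the Chern class computation. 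I would first record that the generic singular member of $|\sL_t|$ carries a single ordinary node, at which $\pi_2$ is a local isomorphism onto a smooth branch of $W_t$, so that $\Gamma_t$ is generically reduced; being a local complete intersection curve, hence Cohen--Macaulay, it has no embedded points, and therefore it is reduced along its one-dimensional support. This settles reducedness in (i) once irreducibility is known.

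For irreducibility the obstacle is that the natural special fibre is reducible: over $(A_0,\sL_0)$ the discriminant degenerates to $W_0=3E+L_1+\dots+L_9$, so $\Gamma_0$ breaks into the component dominating $E$ together with nine components dominating the flex-tangents, and one cannot simply invoke openness of geometric irreducibility. Instead I would deduce connectedness of the general fibre from upper semicontinuity of $h^0(\Gamma_t,\hol_{\Gamma_t})$ and the connectedness of $\Gamma_0$ (the nine line-components meet the central one over the flex-points), and then promote connectedness to irreducibility by a monodromy argument: as $t$ moves in $T$, the $\hat G$-action permutes the components of $\Gamma_0$ and the local monodromy around the discriminant locus interchanges the nine flex-tangent components among themselves and with the central component, so the monodromy on the set of components is transitive. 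Combined with connectedness this forces $\Gamma_t$ to be irreducible for general $t$; the conclusion then transports to $R_t=\pi_1(\Gamma_t)\sim 3D_t$, to $W_t=\pi_2(\Gamma_t)$ and to $\sB_t=\varphi_{\sL_t}(R_t)$.

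For the singularity count in (ii) I would read off the two types of singular points of $W_t$ from two geometric strata and check them against Plücker. A cusp of $W_t$ occurs exactly where the singular member of $|\sL_t|$ acquires an $A_2$ point, i.e.\ where $\pi_2$ ramifies, while a node of $W_t$ occurs where two distinct points of $\Gamma_t$ share the same image, i.e.\ at binodal curves of $|\sL_t|$. Specializing to $t=0$, the Corollary of \ref{SSintersections} shows that the $36$ transverse intersections $L_i\cap L_j$ persist as $36$ nodes, whereas the cuspidal stratum collapses in groups of eight onto each of the nine flex-points, accounting for $9\cdot 8=72$ cusps. The count is pinned down by the adjunction balance $136-28=108=36+72$ for a plane curve of degree $18$ and geometric genus $28$, and is made consistent by the Plücker relation $d^\vee=d(d-1)-2\delta-3\kappa=18\cdot17-2\cdot36-3\cdot72=18$; infinitely near nodes (tacnodes) are admitted as the only further degeneration compatible with these numbers.

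Finally, for (iii) and (iv): the tangent line to $W_t$ at a smooth point $[C]$ is dual to the singular point of $C$, which is precisely the corresponding point of $R_t$ mapping under $\varphi_{\sL_t}$ to $\sB_t$; this identifies $\sB_t$ with the dual curve $W_t^\vee$, and the Plücker check above confirms $\deg\sB_t=18$. Since the general point of $W_t$ is a curve with a single node, the fibres of $\pi_2$ and of $\pi_1$ over it are single points, so both projections are birational; together with $p_a(\Gamma_t)=28=p_g(W_t)=p_g(\sB_t)$ this shows that $\Gamma_t$ is the common normalization, whence $\Gamma_t$ is smooth of genus $28$. I expect the genuine difficulty to lie entirely in the irreducibility step, namely controlling the monodromy on the components of $\Gamma_0$ and excluding the a priori possibility that the family is constantly reducible, while the remaining assertions follow formally once the degenerate picture and the Plücker bookkeeping are in place.
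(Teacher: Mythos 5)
First, a point of orientation: the paper does not prove Theorem~\ref{T:mainls} at all --- it quotes it from \cite{la-se}, and its own contribution begins afterwards (locating the limits of the nodes and cusps and removing the tacnode ambiguity). So your proposal has to be measured against what a proof of the theorem genuinely requires, and there it breaks at exactly the point you yourself flag as the crux. The inference ``the general fibre $\Gamma_t$ is connected and the monodromy permutes its components transitively, hence $\Gamma_t$ is irreducible'' is false: in the family $\{x^2-ty^2=0\}\subset \PP^2\times \A^1$ the general fibre is a pair of distinct lines through a point, hence connected and reduced, and the monodromy around $t=0$ exchanges the two lines, yet every fibre is reducible. Transitive monodromy on components only tells you all components look alike; it cannot reduce their number. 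Moreover the transitivity you invoke is itself pure assertion: monodromy acts on the components of the \emph{general} fibre, not of $\Gamma_0$, and nothing in your argument produces a loop in the parameter space realizing an exchange of a ``flex-tangent component'' with a ``central component'' (the fibrewise $\hat G$-action certainly does not do it, since the $\ell_i$ and the components of $R_0$ lie in different $\hat G$-orbits). This step is precisely the hard content of \cite{la-se}; your semicontinuity argument for connectedness ($h^0(\hol_{\Gamma_0})=1$ because $\Gamma_0$ is reduced and connected) is fine, but connectedness yields irreducibility only after one knows the general $\Gamma_t$ is \emph{smooth}, which you never establish independently.

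Two further steps are circular or insufficient. (a) Your generic reducedness, and later your birationality of $\pi_1,\pi_2$, rest on the claim that the generic singular member of $|\sL_t|$ has a single ordinary node. That claim is essentially equivalent to $W_t=\overline V_{\sL_t,1}$ together with part (iv) --- i.e.\ to what is being proved --- and it visibly fails at $t=0$: since $\varphi_{\sL_0}$ is $\mathfrak S_3$-Galois, the general curve parametrized by the component $E$ of $W_0$ has \emph{three} nodes, one on each of $\Delta,\De_{-2},\De_{-2}^t$, which is exactly why $E$ appears with multiplicity $3$ in $W_0$. The non-circular route is the opposite one: $\Gamma_0$ is reduced (local complete intersection plus the local analysis at the nine triple points, as in the paper), and geometric reducedness of fibres is an open condition in a proper flat family. (b) The Pl\"ucker bookkeeping does not ``pin down'' the singularities. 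The relations $\sum_P\delta_P=136-28=108$ and $\deg W^{\vee}=2d+2g-2-\kappa=18$ are satisfied not only by $36$ nodes plus $72$ cusps (or tacnodes), but for instance by $12$ ordinary triple points plus $72$ cusps: a triple point has $\delta_P=3$ and smooth unramified branches, so twelve of them give the same $\delta$-sum and the same dual degree as $36$ nodes. Hence ``nodes and cusps only'' must be established geometrically from the degeneration, and in any case your count presupposes statement (iii), $p_g(W)=28$, and $\deg W=\deg\sB=18$ --- the last two come from Zeuthen--Segre and Hurwitz as in the paper's outline, but $p_g(W)=28$ in your write-up is obtained from the smoothness of $\Gamma_t$, which you in turn deduce from $p_g(W)=28$. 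Untangling these circularities is what the actual proof in \cite{la-se} does, and it is the part missing here.
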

  
Later in this section we will remove from this statement the possibility that there are tacnodes. Note also that, since $p_a(\Gamma)= p_a(R) =p_g(W) = 28$ and $\pi_2$ is birational,  $\Gamma$  and $R$ must be nonsingular.

 \subsection{A special case of abelian surface of type $(1,3)$.}\label{SSspecial}
 This subsection describes results from \cite{b-l-13} and \cite{la-se}. Let $E$ be an elliptic curve. On the abelian surface $A_0:=E\times E$ let $\sL_0$ be the line bundle
 $$
 \sL_0 =\hol_{A_0}(E\times \{0\}+\{0\}\times E + \tilde\Delta)
 $$
 where
 $$
 \tilde\Delta := \{(z,-z): z \in E\}
 $$
 is the \emph{antidiagonal}. $\sL_0$ defines an irreducible polarization of type $(1,3)$ on $A_0$.  We choose canonical coordinates $(y_1,y_2,y_3)$  in $|\sL_0|$ so that $E$ is a cubic $E_\lambda$ of the Hesse pencil. With this choice the action of the   group  $\hat G$ on $\PP^2$ as studied in \S \ref{Sheishess} is the same as the action of $\hat G$ on the linear system $|\sL_0|$. Then:

 \begin{enumerate}
     \item The branch curve $\sB\subset \PP^{2\vee}$ is equal to $3 E^{\vee}$, where $E^{\vee}$ is the sextic dual of $E$.

     \item The ramification divisor   of $\varphi_{\sL_0}:A_0 \ra \PP^{2\vee}$ is
     $$
     R_0 = \Delta + \De_{-2}+\De^t_{-2}
     $$
     where $\Delta= \{(z,z): z\in E\}\subset A_0$ is the diagonal, $\De_{-2}: =\{(z,-2z):z\in E\}\subset A_0$ is the graph of $(-2)_{|E}$, and $\De^t_{-2}: = \{(-2z,z):z\in E\}$ its transpose.

     \item  $$
     \Delta \cap \De_{-2}=\Delta \cap \De_{-2}^t=\De_{-2}\cap \De_{-2}^t=\{(z,z): 3z=0\}
     $$
     consists of 9 ordinary triple points for $R_0$. In particular 
     $p_a(R_0)=28$.
     
     \item
       $\varphi_{\sL_0}:A_0 \ra \PP^{2\vee}$ is a Galois covering with Galois group $\mathfrak S_3$, and the divisors $ \Delta , \De_{-2}, \De^t_{-2}$ are the respective sets
 of fixpoints of the three different elements of order two in $\mathfrak S_3$.
 
 The 9 triple points of $R_0$  are the points which are fixed by the entire group, and they map to the 9 cusps of $\sB$. 
 
 The morphism $\varphi_{\sL_0}$ factors through
 a $\ZZ/3$-Galois covering $ A_0 \ra \sK$, where $\sK$ is the singular K3 surface with
 9 points of type $A_2$ which is the double covering of the plane branched on $\sB$.

     \item\label{duality} $\varphi_{\sL_0|\Delta}:  \Delta \ra E^\vee$   coincides with the duality map 
     $$
     \Delta=E \ra E^\vee,\quad z \mapsto \text{tangent line at $z$}.
     $$
     
     \item
     The  pencil of lines through a cusp of $\sB$ yields a pencil in $|D|$ of curves
     which have the same singular point, at the inverse image of the cusp, and  with  general singularity  a node. These pencils yield 9 lines $\ell_i$ in the incidence
     curve $\Ga$ and also 9 lines $L_i$ contained in the discriminant curve $W_0$.

     \item 
     
     The incidence curve is set theoretically
     $$
     \Gamma_0 = R_0\cup \ell_1\cup\cdots\cup\ell_9
     $$
 where the $\ell_i$'s are pairwise disjoint lines. 
 
 Over one of the above 9 points $P_i $ in $A_0$ (3-torsion points of the diagonal $\De$),
 there pass 4 smooth components of $\Ga_0$.
 
 To analyse the local geometry, we choose local coordinates $(z_1, z_2)$ at $P_i$
 such that the $\mathfrak S_3$ quotient can be written via the geometry of a degree 3 polynomial, getting coordinates $p,q$ at the cusp such that 
 $$  t ^3 +  p t - q = (t-z_1) (t-z_2) ( t + z_1 + z_2), \Rightarrow \sB = \{ 4 p^3 + 27 q^2=0\}.$$
 
 Then the curves of $|D|$ are locally given by
 $$   D (a,b,c) = \{  a ( z_1 z_2 - (  z_1 + z_2 )^2)  - b z_1 z_2 ( z_1 + z_2) +c =0\}.$$

Now, $z$ is singular for $ D(a,b,c) $ means that 
$$
( 2 z_1 + z_2)  ( b z_2 + a) =0  , \ \ 
 ( 2 z_2 + z_1) ( b z_1 + a) = 0.$$
 
 The solution $z_1=z_2=0$ yields the curve $\ell_i$,
 and we can otherwise set $b=1$ (the cusp tangent is for $a=0$) 
 and see that $\Ga$ is defined as the complete intersection in $\CC^3$
 (with coordinates $(z_1, z_2, a)$)
  of two pair of planes.

The union of the other three lines maps (not isomorphically) in $A_0$ to 
the union of three lines:

$$ 	(z_2 + 2 z_1 ) (z_1 - z_2 ) (z_1 + 2 z_2)=0.$$

The discriminant curve is locally given by  
$$  b=1, c = 5  a^3 ,$$

and we have indeed a flex point of the reduced discriminant.

 Note that,  since $\Ga_0$ is Gorenstein
 (being a local complete intersection) and reduced, we could have told a priori
 that, at the point $(z_1,z_2, a, c)= (0,0,0,0)$,
 the Zariski tangent dimension    can be only  2 or 3 (corresponding to the fact that a Gorenstein configuration of 4 points in a projective space consists either of collinear points or of a projective base in the plane), and  it could not
 have been 2 else the local contribution to the number $\de$ of double points would have been 6; it then follows  that the tangent dimension is 3  and  that locally $\Ga$ is a complete
 intersection as described by the above equations.

       \item The discriminant curve in $\PP^2$ is
     $$
     W_0=3E+L_1+ \cdots + L_9,
     $$
where $L_1, \dots, L_9$ are the flex tangents of $E$.
\item\label{2duality} $\pi_2: \Gamma_0\ra W_0$ maps  $\ell_1\cup\cdots\cup\ell_9$ onto $L_1\cup\cdots\cup L_9$ and  $R_0$ onto $E$.
Its restriction to $\Delta$ is the composition $\Delta \ra E^{\vee}  \ra E$   where the first map is \eqref{duality} and the second is again the duality map. A similar behaviour occurs on the components $\De_{-2}$ and $\De_{-2}^t$ because of the action of the Galois group $\mathfrak S_3$. The 9 triple points of $R_0$ are mapped to the flexes, and the map $\pi_{2|R_0}$ factors as
$$
R_0 \ra E^{\vee} \ra E
$$
where the first map is $\varphi_{\sL_0}$ and the second map is duality.

     \item $\overline V_{\sL_0,1}= L_1 \cup \cdots \cup L_9$. In particular $\overline V_{\sL_0,1}\ne W_0$.

 \end{enumerate}


 \subsection{The degeneration of the singular points.}

 We now consider a 1-parameter family of polarized surfaces of type $(1,3)$, parametrized by a pointed curve $(U,0)$ with general fibre $(A,\sL)$ and special fibre 
 $(A_0,\sL_0)$ as described in subsection \ref{SSspecial}. Here we assume that the elliptic curve $E$ is not equianarmonic. There is an induced family of incidence curves and of discriminant curves and maps between them:
 $$
\xymatrix{
\widetilde\Gamma \ar[dr]\ar[rr]^-{\Pi_2}&&\widetilde W \ar[dl]\\
&U}
$$
 For a general $t \in U$ the morphism $\pi:=\Pi_2(t)$ is birational  and $\Gamma=\widetilde \Gamma(t)$ is nonsingular. The cusps of $W$ are precisely the images of the points of $\Gamma$  where $d\pi$ degenerates.

 When $(A,\sL)$ tends to $(A_0,\sL_0)$ every such point must tend to a point where the differential of the map 
 $$
 \pi_2:\Gamma_0 = R_0 \cup \ell_1\cup\cdots\cup\ell_9 \ra W_0
 $$
  degenerates or $\Gamma_0$ is singular. Certainly $d\pi_2$ does not degenerate at the nonsingular points of $W_0$ along the components $\ell_1, \dots, \ell_9$. The restriction $\pi_{2|R_0}$ is the composition $R_0 \ra E^{\vee}  \ra E$ (compare \eqref{2duality}). The differential of this map degenerates precisely at the 9 points   mapped to the cusps of $E^{\vee}$, and these are mapped to the inflection points. Therefore \emph{the 72 cusps of $W$ tend to the 9 inflection points  of $E\subset W_0$}.
  
  Another way to establish the claim about the limits of the cusps is to  observe that if a smooth point $P'$ of $E_{\la}$ 
were a limit of cusps, then it would also be the limit of a smooth point, and we would have two branches
whose local monodromy are respectively a couple of non commuting transpositions, and a simple transposition.
The corresponding permutations would  therefore be contained in a subgroup isomorphic to $\mathfrak S_5$.
But the local monodromy in the limit equals to the product of three pairwise disjoint 
transpositions, which cannot therefore be contained in a subgroup  isomorphic to $\mathfrak S_5$.

   Since the elliptic curve $E$ is not equianarmonic the inflectional tangents $L_1, \dots, L_9$ meet in 36 points which are nodes of $W_0$ (compare subsection \ref{SSintersections}). We claim that when we deform $(A_0,\sL_0)$ these 36 nodes cannot be smoothed and therefore must deform into   36 nodes of $W$. Thus  \emph{the nodes of $W$ tend to the 36 pairwise intersections of the inflectional tangents of $E$.}
   
   This shows in particular that $W$ has precisely 36 distinct nodes, and therefore it has no tacnodes.

   \medskip
   
 \textbf{Proof of the Claim.}   Each node $P_{ij}$ of $W_0$ belongs to the intersection
 of two flex tangents $L_i, L_j$, which in turn are the respective isomorphic images of the smooth rational curves $\ell_i, \ell_j$,
 which are disjoint. Let  $P_i(j) \in \ell_i, P_j (i)  \in \ell_j$   the two inverse images of $P_{ij}$.
 
 Deforming $A_0$, also $R_0$ deforms to $R_t$, and the two branches at 
 $P_i(j) $, respectively at $ P_j (i) $ deform to two disjoint branches.
 
 Their respective images in  $\PP^2$ necessarily intersect,  and we have an equisingular deformation of the node.

     
     \bigskip
     
     \section{A  new component of the moduli space of surfaces of general type, consisting of surfaces with $p_g=q=2$, $K_S^2=6$, $d=3$.}

     Our main aim is to establish, in the case of the curve $ C : = \{ f(y) : = \sum_i y_i^2 y_{i+1} =0\}$, 
      that the general surface $S'$ is smooth,
      where 
       $$ S' : = \{ (y,z) \in \PP( V) \times A | \sum_j y_j x_j(z) = 0 , \sum_i y_i^2 y_{i+1} =0\}, \ S' \subset A \times C.$$

     For the reader's convenience, we reproduce from \cite{al-cat}
     the  discussion of  the singularities of $S'$, where we have shown that $C$ is smooth.
     
      We first observe  that     $C$ does not contain any of the flexpoints. Since $C$ is $G$-invariant,
      and the flex-points are a $G$-orbit, it suffices to verify that $(1,-1,0) \notin C$, this is clear since 
      for $ f =  \sum_i y_i^2 y_{i+1} $, $ f(1,-1,0)= -1$. 
      
       Moreover, $C$ intersects transversally any smooth curve  $E_{\la}$ of the Hesse pencil:
      this is a direct consequence of Bertini's theorem for general $\la$, but follows also for each
      $\la$  because $ C \cap E_{\la}$ is $G$-invariant, and $G$ acts freely on $E_{\la}$,
      hence $ C \cap E_{\la}$ consists of nine distinct points.
      
      Moreover, $C$ does not intersect the 36 nodes of the curve $W_0$, for general choice of $\la$,
      since these points sweep out the nine lines $ L^*_i : = \{ y_i = \e^j y_{i+1}\}$, hence 
      there is an injective correspondence $\la \mapsto L_i (\la) \cap L_j (\la)$.
      
      Finally, for general $\la$, $C$ intersects transversally each of the nine lines $L_1, \dots, L_9$
      which are the flex tangents of $E_{\la}$. Since the nine lines are the orbit of $L_1$ under the
      action of $G$, it suffices to show our assertion for $L_1$ and for some value of $\la$.
      
      If we intersect $C$ with the tangent at $(1, -1, 0)$ to the Fermat cubic $E_0$, 
       then we get 
      $$ y_1 + y_2 = \sum_i y_i^2 y_{i+1} =0 \Leftrightarrow y_2 = - y_1 , - y_1^3 + y_1^2 y_3 + y_3^2 y_1=0,$$
      that is, 
      $$y_2 = - y_1 , \ {\rm and } \ y_1 ( y_3^2 + y_1 y_3 - y_1^2) =0,$$ 
      and these are three distinct points.

      \subsection{The general surface $S'$ is smooth} Passing to the smoothness of $S'$, 
 	$(y,z)$ is  a singular point of $S'$ if and only if  $z$ is  a singular point of the curve $D_y:=\{z | \sum_j y_j x_j(z)=0\}$ and the rows of the matrix
 	
 	\begin{equation}
 		\begin{pmatrix}
 			y_3^2 + 2y_1y_2 & y_1^2 + 2y_2y_3 &  y_2^2 + 2y_1y_3  \\
 			x_1 & x_2& x_3
 		\end{pmatrix}
 	\end{equation}
	are  proportional. This means  that
	
	\begin{equation}
		x: = (x_1, x_2,x_3)= \nabla f (y), \ y: = (y_1, y_2, y_3)
	\end{equation}
	
	and 	we view $x$ as a point of $(\PP^2)^{\vee}= : \PP'$, while $ y \in \PP : = \PP^2$.
	
Geometrically, this means that $ x \in C^{\vee}$, and $x$ represents a tangent line to $C$ at $y$,
hence $y$ represents a line $\Lam_y$ tangent to $C^{\vee}$ at $x$.

Moreover, since $z$ is a singular point of $D_y$, which is the inverse image under $\varphi_D$
of the line $\Lam_y$ corresponding to $y$, we require  that the	 line $\Lam_y$ is tangent at $x$
to the branch curve $\sB$ of $\varphi_D$. Hence, that $\sB$ and $C^{\vee}$ are tangent.

Hence, $S'$ is smooth if $\sB$ and $C^{\vee}$ intersect transversally.

By duality, this is equivalent to the assertion that $C$ intersects transversally the discriminant curve $W$.

We prove  the following
\begin{prop}\label{transversality}
Let $\sB$ be the branch curve of $\varphi_D : A \ra \PP^2$, where $D$ is a polarization of type $(1,3)$
and the pair $(A, D)$ is general.

Then, $C$ being the plane curve $ C : = \{  \sum_i y_i^2 y_{i+1} =0\}$, 
$\sB$   intersects transversally  the dual sextic curve $C^{\vee}$ and $C$ intersects transversally
the discriminant curve $W$.
\end{prop}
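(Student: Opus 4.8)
The plan is to prove the second, equivalent form of the statement—that $C$ meets the discriminant curve $W$ transversally—and to deduce the assertion about $\sB$ and $C^{\vee}$ from the duality of $\sB$ and $W$ (Theorem \ref{T:mainls}(iii)) together with the duality of $C$ and $C^{\vee}$. Since transversality of $C\cap W$ is an open condition on the irreducible moduli space of polarizations of type $(1,3)$, it suffices to exhibit a single pair $(A,D)$ for which it holds; and such a pair I would produce by degeneration. Fix a general, in particular non-equianharmonic, Hesse cubic $E=E_{\la}$ and consider the one-parameter family $\widetilde W\to U$ with special fibre $W_0=3E+L_1+\cdots+L_9$ and general fibre $W_t$, as in \S 3. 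By B\'ezout $C\cdot W_t=3\cdot 18=54$, so the whole problem is to show that for small $t\neq 0$ these $54$ points are distinct, reduced, and transverse.

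First I would locate the $54$ limiting points on $W_0$. As a divisor on $C$ one has $C\cap W_0=3\,(C\cap E)+\sum_i (C\cap L_i)$: the nine transverse points of $C\cap E$ each carry multiplicity $3$ from the component $3E$ (length $27$), while the points of $C\cap\bigcup_i L_i$ are, by the computations immediately preceding this Proposition, $27$ reduced points, pairwise distinct, lying off the nodes $L_i\cap L_j$ and off the flexes $P_i$ (length $27$); this accounts for all $54$. Next I would rule out any collision with the singular locus. By the analysis of the degeneration of the singular points in the previous subsection, the $36$ nodes of $W_t$ tend to the points $L_i\cap L_j$ and the $72$ cusps tend, in groups of eight, to the nine flexes $P_i$. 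For general $\la$ the curve $C$ avoids both of these finite sets, so by continuity the singular points of $W_t$ cluster, for small $t\neq 0$, in small neighbourhoods of them; in particular they stay away from $C$ and from small neighbourhoods of the nine points $C\cap E$ and of the $27$ points $C\cap\bigcup L_i$, all of which are general points of $E$, respectively of the lines $L_i$.

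The local analysis then finishes the proof, using that for general $t$ the incidence curve $\Ga_t$ is smooth and $\pi_2$ is birational (Theorem \ref{T:mainls}). Near one of the $27$ points on $\bigcup L_i$ the curve $\Ga_0$ has the single branch $\ell_i\to L_i$, so $W_t$ is a single smooth branch approaching $L_i$, with tangent direction tending to that of $L_i$, hence transverse to $C$ because $C$ meets $L_i$ transversally. Near one of the nine points $P\in C\cap E$ the multiplicity $3$ comes from the three branches of $\Ga_0$ lying over a general point of $E$—one on each of $\Delta$, $\De_{-2}$, $\De_{-2}^t$, via the birational Gauss--duality maps of \eqref{2duality}—so $W_t$ has three smooth branches near $P$, each approaching $E$ with tangent tending to that of $E$. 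Because $W_t$ has no singular point near $P$ (previous paragraph), these three branches are disjoint there, and each meets $C$ transversally since $C$ meets $E$ transversally; thus the apparent triple contact resolves into three distinct transverse points. Summing gives $27+27=54$ distinct transverse intersection points for small $t\neq 0$, hence transversality for general $(A,D)$, and dually $\sB$ meets $C^{\vee}$ transversally.

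The hard part is exactly the multiplicity-$3$ behaviour along $E$: one must guarantee that the three sheets of $W_t$ which collapse onto $E$ genuinely separate, without producing a tangency to $C$ or a coincidence of two intersection points. This is where the control of the singular locus is essential, for a coincidence of two sheets on $C$ would force a node of $W_t$ on $C$, and the degeneration of the nodes to the points $L_i\cap L_j$ (avoided by $C$) precisely excludes this near the points of $C\cap E$. Everything else—the count, the transversality of the individual branches, and the behaviour along the lines $L_i$—is then a routine consequence of openness and of the transversality facts for $C$ with $E$ and with the $L_i$ established just before the Proposition.
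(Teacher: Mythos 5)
Your proof is correct and takes essentially the same route as the paper's: degeneration to $W_0 = 3E_{\la} + L_1 + \cdots + L_9$ with $E_{\la}$ non-equianharmonic, transversality of $C$ with the reduced curve $(W_0)_{red}$ established just before the Proposition, exclusion of the singular points of $W_t$ from a neighbourhood of $C$ via the limiting behaviour of the $36$ nodes and $72$ cusps, and the resolution of the triple contact along $E_{\la}$ into three smooth branches, pairwise disjoint and each transverse to $C$. The B\'ezout count $54 = 27 + 27$, the explicit identification of the three branches with $\Delta$, $\De_{-2}$, $\De^t_{-2}$, and the reduction-by-openness to a single pair $(A,D)$ are extra bookkeeping that the paper leaves implicit, but they do not change the argument.
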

\begin{proof}

Let $W_0$ be the discriminant curve in the degenerate case where $A = E\times E$ and $E =: E_{\la}$ is 
not isomorphic to the Fermat cubic. 

Then $W_0$ consists of $E_{\la}$ counted three times, plus the nine flex tangents $L_1, \dots, L_9$.

If $\la$ is general, then  the cubic curve $C$ is transversal to the reduced curve $(W_0)_{red} = E_{\la}
\cup L_1 \cup \dots \cup L_9$.

If $W_t$ is a small deformation of $W_0$, then $C$ does not contain any singular point of $W_t$,
since the 36 nodes of $W_t$ tend to the 36 nodes of $W_0$, while  the cusps of $W_t$ tend to
the flexpoints of $E_{\la}$: and $C$ does not pass through these nodes of $W_0$ and these flexpoints.

Since $C$ intersects $E_{\la}$ transversally at 9 smooth points $P'_i$ , deforming $W_0$
to $W_t$ we obtain three smooth branches in the neighbourhood of $P'_i$ which remain transversal
to $C$ and do not intersect each other.

\end{proof}

\bigskip

\bigskip

\bigskip

\bigskip

\bigskip

{\bf Acknowledgement:} Thanks to Gianfranco Casnati for pointing out to the first author the work of the second author together with Herbert Lange.

\end{document}